\documentclass[12pt]{amsart}
\usepackage{geometry}                 
\geometry{a4paper}                   
\usepackage{amssymb}
\usepackage{amsmath}
\usepackage{amsthm}
\DeclareMathOperator{\mfn}{MULT}
\DeclareMathOperator*{\invlim}{inv\,lim}

\def \ca{cellular automaton}
\def \cas{cellular automata}
\def \odo{odometer}
\def \ZZ{\mathbb Z}
\def \topconj{topologically conjugate}
\def \NN{\mathbb N}
\def \std{space-time diagram}

\begin{document}

\newtheorem*{Th2}{Theorem 2}
\newtheorem*{Th2A}{Theorem 2A}
\newtheorem*{Th2B}{Theorem 2B}
\newtheorem*{theorem}{Theorem}
\newtheorem*{Th1}{Theorem 1}
\newtheorem*{remark}{Remark}
\newtheorem{lemma}{Lemma}

\title {Embedding odometers in  cellular automata}

\author{Ethan M. Coven}
\address{Department of Mathematics, Wesleyan University, Middletown CT 06457-0128}
\email{ecoven@wesleyan.edu}

\author{Reem Yassawi}
\address{Department of Mathematics,
Trent University,
Peterborough ON, Canada K9L 1Z8}
\email{ryassawi@trentu.ca}

\thanks{Some of this work was done while the first author was a guest of Trent University.  He thanks it for its hospitality and support.  The authors thank M.~ Pivato for  introducing them to gliders with reflecting walls.}

\dedicatory{To Micha\l \  Misiurewicz with admiration and affection}

\subjclass[2000]{Primary 37B10, 37B15}

\keywords{odometer, embedded,  cellular automaton}

\date{April 15, 2009}

\begin{abstract}
We consider the problem of embedding odometers 
in one-dimensional 
cellular automata. We show that
(1) every odometer can be be embedded in  a  gliders with reflecting walls cellular automaton, which one depending on the odometer, and
(2) an odometer can be embedded in a cellular automaton with local rule  
$x_i \mapsto x_i + x_{i+1}  \mod n$  ($i \in \mathbb Z$), where  $n$  depends on the odometer, if and only if it is  ``finitary.''

\end{abstract}

\maketitle

\section*{Introduction}

An \emph{odometer\/} is the ``$+1$'' map on a countable product of finite cyclic groups.  A (one-dimensional) {\ca\/} $(X,T)$ is a dynamical system defined by a local rule on a closed, $T$-invariant subset of either $A^\NN$ or $A^\ZZ$,
where $A$ is a finite alphabet.
In \cite{CPY} the authors and M.~Pivato
partially solved the ``give me a \ca\ and I will find an \odo\ that can be embedded in it" problem.  In this paper we 
completely
solve the converse problem: ``give me an \odo\ and I will find a \ca\ that it can be embedded in.''

\begin{Th1}
Every \odo\ can be embedded in a  gliders with reflecting  walls \ca.
\end{Th1}

Although finitary odometers (defined in Theorem~2 below) can be embedded in a number of cellular automata \cite{PY}, Theorem~1 identifies a (relatively small) class of cellular automata such that \emph{every\/} odometer can be embedded one of them.

\begin{Th2}
Every finitary \odo\ $(\ZZ(S),+1)$, i.e. one  such that the set of prime divisors of the members of ~$S$ is finite, can be embedded in the one-dimensional, two-sided \ca\  with local rule $x_i \mapsto x_i + x_{i+1}  \mod n$  ($i \in \mathbb Z$), defined on the space of all doubly infinite sequences with entries from~$\ZZ_n$, the ring of integers modulo~$n$,  where $n$ is the product of the primes that divide  infinitely members of~$S$.

Conversely, only finitary  \odo s can be embedded in such  \cas. 
\end{Th2}

\section*{Definitions and Background}

Let $S = (s_1,s_2,\dots)$ be a sequence of integers greater than $1$.
Define
$$\ZZ(S) := \prod_{k\ge1}
\ZZ/s_k\ZZ
\text{\hbox{ }\hbox{ } and \hbox{ }\hbox{ }}
\widetilde\ZZ(S) := \invlim_{k \to \infty}
(\ZZ/s_1s_2\cdots s_k\ZZ, \beta_k),$$
where the binding maps 
$\beta_k : s_1s_2\cdots s_{k+1}\ZZ\to
s_1s_2\cdots s_k\ZZ$
are defined by 
$$z \mapsto z \mod s_1s_2\cdots s_k.$$

\noindent Addition in $\ZZ(S)$ is ``with carrying,'' addition in $\widetilde\ZZ(S)$ is coordinatewise, i.e. without carrying. 
$\ZZ(S)$ and $\widetilde\ZZ(S)$ are
isomorphic, compact, abelian, topological groups~\cite{D}. 

The $+1$ map on $\ZZ(S)$ is defined by
$$z \mapsto z + (1,0,0,\dots)$$ 
and the $+\tilde1$ map on $\widetilde\ZZ(S)$ is defined by
$$z \mapsto z + (1,1,\dots).$$

\noindent$(\ZZ(S),+1)$ and 
$(\widetilde\ZZ(S),+\tilde1)$ are topologically conjugate 
(any topological group isomorphism of $\ZZ(S)$ onto
$\widetilde\ZZ(S)$ that takes~$1$
to~$\tilde1$ is a topological conjugacy)
 and are called the \emph {$S$-adic odometer}.
When $S = (n,n,\dots)$, 
$(\ZZ(S),+1)$ is the well-known
\emph{$n$-adic odometer},
 denoted
$(\ZZ(n), +1)$.

By Theorem ~7.6 of \cite{BS},  a complete topological conjugacy invariant of  
$(\ZZ(S),+1)$ is the \emph{multiplicity function\/} 
$\mfn_S : \{\text{primes}\} \to 
\{0,1,\dots,\infty\}$, defined by
$$\mfn_S(p) := \sum_i \{\max j : p^j \text{ divides } s_i\}.$$
Thus $\mfn_S(p)$ is the total number of times that~$p$ divides members of~$S$.

Throughout this paper a two-sided  \emph{cellular automaton} $(X,T)$ will be  
a dynamical system defined on a closed, $T$-invariant subset of $A^\ZZ$,
where $A$ is a finite alphabet and
$T$ is given by a \emph{local rule}
$\tau : A^{2m+1} \to A$
for some $m \ge 0$ as follows: 
$[T(x)]_i = \tau(x_{i-m},\dots,x_{i+m})$  ($i \in \ZZ$).  $T$~is continuous and commutes with the 
\emph{shift}  
$\sigma: A^{\ZZ} \to A^{\ZZ}$,
defined by  $[\sigma(x)]_i = x_{i+1}$ ($i \in \ZZ$).
When appropriate, we will write
$x \in A^\ZZ$ as $x_L.x_R$,
where the dot separates the negative indices from the non-negative ones.
One-sided \cas\/ are similarly defined.  

When  $A$  has $n$ elements, we may sometimes assume that
$A = \ZZ_n$,  the ring of integers modulo~$n$.  The \ca\ defined on all doubly infinite sequences with entries from~$\ZZ_n$ and local rule
$x_i \mapsto x_i + x_{i+1} \mod n$
$(i \in \ZZ)$
will be denoted~$(\ZZ_n^\ZZ,T_n)$. 
The maps $T_n$ have no memory and so we define one-sided 
cellular automata
$(T_n)_R : \ZZ_n^{\NN_0} \to \ZZ_n^{\NN_0}$
by the same local rule.  Here
$\NN := \{1,2,\dots\}$ is the natural numbers and  
$\NN_0 := \{0,1,2,\dots\}$.

A more geometric class of cellular automata is the class of 
\emph{gliders with reflecting walls\/} cellular automata \cite{M}, Example 6.5.

The alphabet for all these one-sided \cas\ is
$$ 
\{W,L,R,\varnothing\},
$$
where $W$ is a stationary wall, $L$ is a left-moving particle, $R$ is a right-moving particle, and $\varnothing$ is an empty space.

The spaces $X \subseteq A^\NN$ satisfy:
for every $x \in X$,
$x_1 = W$,
$x_i = W$ for infinitely many~$i$,
and between any two consecutive ~$W$ there is exactly one particle.

The local rule for these automata is
\newline\noindent $\bullet$  Walls do not move.
\newline\noindent $\bullet$  If the space immediately to the left of~$L$ is empty,~$L$ and ~$\varnothing$ change places.  If the space immediately to the left of~$L$ is ~$W$, 
$L$ becomes~$R$ but does not move.
\newline\noindent $\bullet$  If the space immediately to the right of~$R$ is empty,~$R$ and ~$\varnothing$ change places.  If the space immediately to the right of~$R$ is ~$W$, $R$ becomes~$L$ but does not move.

\smallskip

For a dynamical system $(X,f)$, where $X$ is a subset of some ~$A^\ZZ$ or ~$A^\NN$,
the \emph{\std\/} of~$(X,f)$ with \emph{seed\/}~$x$
is the array whose $(i,j)^\text{th}$ entry is $[f^j(x)]_i$.
It is a convenient way of visualizing the forward $f$-orbit of~$x$, 
$\{f^j(x):j\ge0\}$.
Here we think of ``increasing time'' as going down.
Space-time diagrams for 
systems on one-sided sequences
are similarly defined, and are convenient ways of visualizing \odo s.

For dynamical systems $(X,f)$ and
~$(\widehat X ,\widehat f)$, we say that 
$(X,f)$ can be \emph{embedded\/} in
~$(\widehat X,\widehat f)$ iff  there is a closed, $\widehat f$-invariant subset
$\widehat X'$ of~$\widehat X$ such that $(X,f)$ is topologically conjugate to
~$(\widehat X',{\widehat f}|_{\widehat X'})$.

\section*{Every  odometer can be embedded in a  gliders with reflecting walls \ca}

Gliders with reflecting walls \cas\    $(X,T)$ are  defined on one-sided infinite sequences with entries from
$\{W,L,R,\varnothing\}$,
with local rules defined in the Definitions and Background section.

\begin{Th1}
Every \odo\ can be embedded in a  gliders with reflecting walls \ca.
\end{Th1}

\begin{proof}
Let $S = (s_1,s_2,\dots)$.

First assume that at least one~$s_i$ is even.  
Since the multiplicity function is a complete topological conjugacy invariant of~$(\ZZ(S),+1)$,  
the order of the ~$s_i$ is irrelevant, so we may assume that~$s_1$ is even.

Consider the set ~$X$ of all  points in
~$\{W,L,R,\varnothing\}^\NN$  of the form
$$W\leftarrow \tfrac{1}{2}s_1 \rightarrow 
W\leftarrow \tfrac{1}{2}s_1s_2 \rightarrow 
W\leftarrow \cdots,
$$
where the gaps 
contain exactly one particle.
     The columns
     of gaps 
in the \std\ 
of a gliders with reflecting walls \ca\ with  any such point as seed     
     are periodic 
 with least periods 
 $s_1,s_1s_2,\dots$.
 
 We show that 
this one-sided  \ca\ is \topconj\ to
$(\widetilde\ZZ(s_1,s_2,s_3,\dots),+\tilde1))$.
Let $\widetilde T$ be the gliders with reflecting walls \ca\ map and
label the gaps, left-to-right, 
$G_1,G_2,\dots$.  Consider the \std\ of
$(X,\widetilde T)$ with seed $\bar x$, defined by
$R$ appears   at the extreme left of each gap.  For $x$ in the forward
$\widetilde  T$-orbit-closure of
~$\bar x$, define
$$
x \mapsto z = (z_1,z_2,\dots)
\in \prod_{k\ge1}
\ZZ/s_1s_2\cdots s_k\ZZ
$$
as follows.  For $i \ge 1$, let $z_i$, $0\le z_i \le s_1s_2\cdots s_i -1$ satisfy
$$x|_{G_i} = \widetilde T^{z_i}(\overset{\longleftarrow\frac{1}{2}s_1s_2\cdots s_i\longrightarrow}{R,\varnothing,\varnothing,\dots,\varnothing}),
$$ 
i.e. $(\overset{\longleftarrow\frac{1}{2}s_1s_2\cdots s_i\longrightarrow}{R,\varnothing,\varnothing,\dots,\varnothing})$ appears in row~$z_i$ in this \std. 
This map is continuous, one-to-one, and commutes with the appropriate actions, and so is a topological conjugacy.

Now assume that all the ~$s_i$ are odd.
In this case the   \ca\ $(X,\widetilde T^2)$, defined on all points of
~$\{W,L,R,\varnothing\}^\NN$  
of the form
$$ W\leftarrow s_1 \rightarrow 
W\leftarrow s_1s_2 \rightarrow 
W\leftarrow \cdots,
$$
in the forward
$\widetilde  T^2$-orbit-closure of
~$\bar x$ (defined as above, $R$ at the extreme left of each gap), 
is topologically conjugate to
$(\widetilde\ZZ(S),+\tilde1))$.
\end{proof}

\section*{An \odo\ can be embedded in a \ca\ with local rule $x_0+x_1$ if and only if it is ``finitary''}

The word ``finitary'' in the title of this section refers to \odo s 
$(\ZZ(S),+1)$ such that the set of prime divisors of the members of~$S$ is finite.

Throughout this section, 
$(\ZZ_n^{\ZZ},T_n)$ will denote the two-sided \ca\ with local rule
$x_i \mapsto x_i + x_{i+1} \mod n$
$(i \in \ZZ)$.
To avoid notational clutter, we may write~$T$ 
rather than~$T_n$ when $n$ is clear.

\begin{lemma}
Let   $\bar x = \dots 000.100 \dots.$
and let $X$ be the forward $T_n$-orbit-closure of~$\bar x$.

\noindent (1) For any $n \ge 2$, $\bar x_R:= 100\dots$ is $(T_n)_R$-fixed. 

\noindent(2) For any $x \in X$,  if some column
$[T_n^j(x)]_i$ $(j\ge0)$ in the 
\std\ of~$(X,T_n)$ with seed~$x$ is periodic with least period~$m$, then the column
immediately to the left,
$[T_n^j(x)]_{i-1}$ $(j\ge0$),  is periodic with least period~$mn'$ for some factor $n'$ of~$n$ ($n' = 1$ or~$n$ is possible).

\noindent(3) For any $n \ge 2$,  
$\bar x$ has an infinite forward $T_n$-orbit.

\noindent(4) For $n = p$ prime, there exist
$1 = k_1 < k_2 < \dots$  such that for every $i \ge 1$,
the columns
$[T_p^j(\bar x)]_i$ $(j\ge0)$, $i =-k_{i+1}+1,\dots,-k_i$  are periodic with least period~$p^i$. 
\end{lemma}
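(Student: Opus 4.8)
The plan is to first obtain a closed form for the \std\ and then reduce the periodicity statement to a classical fact about binomial coefficients modulo a prime. I would begin by proving, by induction on $j$, that
$[T_p^j(\bar x)]_i = \binom{j}{-i} \bmod p$ for all $j \ge 0$ and all $i \in \ZZ$, with the convention $\binom{j}{m} = 0$ when $m < 0$ or $m > j$. The base case $j = 0$ is immediate, and the inductive step is exactly Pascal's rule $\binom{j+1}{m} = \binom{j}{m} + \binom{j}{m-1}$, which matches the local rule $[T_p(y)]_i = y_i + y_{i+1}$ after the substitution $m = -i$. Thus the diagram is Pascal's triangle mod~$p$, all influence propagating to the left, and the column at position $i$ is the sequence $j \mapsto \binom{j}{-i} \bmod p$.

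With this in hand, setting $m = -i \ge 0$, the statement reduces to showing that for each $\ell \ge 1$ and each $m$ with $p^{\ell-1} \le m \le p^\ell - 1$, the sequence $j \mapsto \binom{j}{m} \bmod p$ has least period $p^\ell$. I would then take $k_\ell := p^{\ell-1}$, so that $k_1 = 1 < k_2 = p < k_3 = p^2 < \cdots$, and the column range $i = -k_{\ell+1}+1,\dots,-k_\ell$ is precisely $m = p^{\ell-1},\dots,p^\ell - 1$, as required.

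For the upper bound on the period I would invoke Lucas' theorem: if $p^{\ell-1} \le m \le p^\ell - 1$, then the base-$p$ expansion of $m$ has its highest nonzero digit in position $\ell-1$, so $\binom{j}{m} \bmod p$ is a product of digit-binomials involving only the lowest $\ell$ base-$p$ digits of $j$, i.e. only $j \bmod p^\ell$; hence the period divides $p^\ell$. For exactness I would rule out the proper divisor $p^{\ell-1}$ by exhibiting two arguments congruent modulo $p^{\ell-1}$ at which the sequence differs: since $m \ge p^{\ell-1}$, the values $j = m$ and $j = m - p^{\ell-1}$ give $\binom{m}{m} = 1$ while $\binom{m-p^{\ell-1}}{m} = 0$ (the lower index exceeds the upper), and $m \equiv m - p^{\ell-1} \pmod{p^{\ell-1}}$. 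Since every divisor of $p^\ell$ is a power of $p$, the least period divides $p^\ell$ but not $p^{\ell-1}$, and so equals $p^\ell$.

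The main obstacle is the exactness (lower bound) of the period: the recursion in part~(2) only guarantees that, reading columns right-to-left, the least period is multiplied by~$1$ or by~$p$ at each step, so it cannot by itself locate the transitions. It is precisely the arithmetic of binomial coefficients via Lucas' theorem that pins down where the period jumps; part~(2) then serves as a consistency check, since the sequence of periods $1, p, \dots, p, p^2, \dots$ increases by a factor of $1$ or $p$ at each column.
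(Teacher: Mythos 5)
Your argument for part~(4) is correct and takes a genuinely different route from the paper's: you compute the space-time diagram in closed form as Pascal's triangle mod~$p$ (the induction via Pascal's rule is right, since influence propagates leftward), and then use Lucas' theorem to get the exact least period of the column $j \mapsto \binom{j}{m} \bmod p$ for $p^{\ell-1} \le m \le p^\ell-1$. Both halves of that computation check out: the upper bound because the Lucas product only involves the lowest $\ell$ base-$p$ digits of~$j$, and the lower bound via $\binom{m}{m}=1$ versus $\binom{m-p^{\ell-1}}{m}=0$. This is sharper than the paper, which only asserts the existence of the cut points $k_\ell$; you pin them down as $k_\ell = p^{\ell-1}$. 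The paper instead derives (4) purely from (1)--(3) by propagating periods leftward column by column.

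The genuine gap is that the lemma has four parts and you have only proved (1) and (4). Part~(2) is a statement about an \emph{arbitrary} point $x$ in the orbit closure and an \emph{arbitrary} modulus $n\ge2$, where no binomial closed form is available; you invoke it (``the recursion in part~(2)'') as if it were known, and even describe it as a ``consistency check,'' but it is one of the claims to be established, and it is used later in the paper (Lemma~4 and Lemma~8 both rely on it for general seeds and general~$n$). The paper's proof of (2) is short but essential: if column~$i$ has least period~$m$, set $\Sigma = \sum_{j=0}^{m-1}[T^j(x)]_i \bmod n$; then $[T^{j+m}(x)]_{i-1} = [T^j(x)]_{i-1} + \Sigma$ for all $j$, so column~$i-1$ has least period $mn'$ where $n'$ is the additive order of $\Sigma$ in $\ZZ_n$, a divisor of~$n$. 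Part~(3) is likewise asserted for all $n \ge 2$, not just primes; your machinery would yield it (reduce mod a prime divisor $p$ of $n$ and observe that the columns $\binom{j}{p^k} \bmod p$ have unbounded least periods, so the orbit cannot be eventually periodic), but you would need to say this explicitly. As written, the proposal is an elegant proof of part~(4) only, not of the lemma.
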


\begin{proof}
Write $T$ in place of ~$T_n$.
\noindent (1) is clear.

\noindent (2)
We prove (2) for $n = p$ prime,
leaving it to the reader to supply the details for the general case.  Suppose that  column~$i$ in the space-time diagram of~$(X,T)$ with seed~$x$ is periodic with least period ~$m$:
$[T^j(x)]_i = 
[T^{j+m}(x)]_i \quad (j \ge 0)$.   
If
$$\sum_{j=0}^{m-1} [T^j(x)]_i \equiv 0 \mod p,$$
then column~$i-1$ is periodic with least period~$m$.
If 
$$\sum_{j=0}^{m-1} [T^j(x)]_i \not\equiv 0 \mod p,$$
then column~$i-1$ is periodic with least period~$pm$.

\smallskip
\noindent (3) If $\bar x$ has a finite forward $T$-orbit, then there exists 
$K\ge0$ such that $x_{-k}=0$ 
for every point~$x$ in this orbit
and for every $k \ge K$.
This contradicts ~(2).  

\smallskip
\noindent (4) follows from (1), (2), and (3). 
\end{proof}

We divide the if and only if statement of Theorem 2 into two separate theorems.

\begin{Th2A} Every finitary \odo\ $(\ZZ(S),+1),$ i.e. one such that the set of prime divisors of the members of ~$S$ is finite, can be embedded in the one-dimensional, two-sided \ca\ $(\ZZ_n^\ZZ,T_n)$ 
 with local rule
$$x_i \mapsto  x_i + x_{i+1} \mod n \quad (i \in \ZZ),$$
where $n$ is the product of the primes that divide infinitely many members of~$S$.
\end{Th2A}

Since the multiplicity function is a  complete topological conjugacy invariant,  every finitary odometer  
is \topconj\ to one of the following two canonical forms:

\noindent (1) the $n$-adic odometer, 
$(\ZZ(n),+1) := (\ZZ(n,n,\dots),+1)$,  where $n$ is the product of distinct primes.

\noindent (2) $(\ZZ(m,n,n,\dots),+1)$, where $m$ and ~$n$ are relatively prime and $n$ is the product of distinct primes.

Theorem~2A follows from  Lemmas~2--7 below.

\begin{lemma}
For $p$ prime and $m\ge2$ such that ~ $p$ is not a factor of~$m$, both 
$(\ZZ(p),+1)$ and
$(\ZZ(m,p,p,\dots),+1)$
 can be embedded in
$(\ZZ_p^\ZZ,T_p).$ 
\end{lemma}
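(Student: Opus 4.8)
The plan is to treat both odometers through a single device, obtaining $(\ZZ(p),+1)$ as the degenerate ($m=1$) case and then adding the cyclic factor by exploiting that $T_p$ is \emph{linear}.

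First I would compute the \std\ of $\bar x=\dots000.100\dots$ under $T_p$. Linearity of the rule gives, by induction, $[T_p^{\,j}(\bar x)]_{-k}=\binom{j}{k}\bmod p$ for $k\ge0$ and $[T_p^{\,j}(\bar x)]_i=0$ for $i>0$; that is, the diagram is Pascal's triangle mod $p$ (so its column periods are $1,p,p^2,\dots$, consistent with Lemma~1(4)). By Lucas' theorem the entry in column $-p^\ell$ reads off a single base-$p$ digit of the time: $[T_p^{\,j}(\bar x)]_{-p^\ell}=\binom{j}{p^\ell}\equiv a_\ell\bmod p$, where $a_\ell$ is the $\ell$-th base-$p$ digit of $j$. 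I would then define, on the forward $T_p$-orbit-closure $X$ of $\bar x$, the map $\psi(x)=([x]_{-1},[x]_{-p},[x]_{-p^2},\dots)$, landing in $\ZZ(p)$ once its coordinates are identified with base-$p$ digits. Continuity is immediate (each coordinate is a projection); the relation $\psi\circ T_p=(+1)\circ\psi$ just says that advancing one time step adds $1$ with carrying to the digit string; and bijectivity follows because, using compactness of $\widetilde\ZZ(p)$ and density of $\NN_0$, every point of $X$ is a $p$-adic Pascal row $(\binom{z}{k}\bmod p)_k$ for a unique $z\in\widetilde\ZZ(p)$, whose digits are precisely $\psi(x)$. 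This yields $(X,T_p)\cong(\ZZ(p),+1)$.

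For $(\ZZ(m,p,p,\dots),+1)$ I would superpose onto $\bar x$ a spatially periodic configuration carrying the extra $\ZZ/m\ZZ$. Suppose $w\in\ZZ_p^\ZZ$ is spatially periodic of period $q$ with least $T_p$-period exactly $m$; put $\bar y=\bar x+w$ and let $Y$ be its forward orbit-closure. By linearity $T_p^{\,j}(\bar y)=T_p^{\,j}(\bar x)+T_p^{\,j}(w)$, where the first summand depends continuously on $j\in\widetilde\ZZ(p)$ and the second depends only on $j\bmod m$; since each finite window stabilizes along congruences, $j\mapsto T_p^{\,j}(\bar y)$ extends continuously to $\Psi:\ZZ/m\ZZ\times\widetilde\ZZ(p)\to Y$, which intertwines $+1$ with $T_p$ and is onto (by density of $\NN_0$, using $\gcd(m,p)=1$ and the Chinese Remainder Theorem). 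As $\ZZ/m\ZZ\times\widetilde\ZZ(p)\cong\widetilde\ZZ(m,p,p,\dots)$, only injectivity remains, and this is where the two summands genuinely separate: if $\Psi(z)=\Psi(z')$ then $T_p^{\,z_2}(\bar x)-T_p^{\,z_2'}(\bar x)=T_p^{\,z_1'}(w)-T_p^{\,z_1}(w)$, whose left side is supported on columns $i\le0$ while the right side is $q$-periodic; a $q$-periodic configuration vanishing on all positive columns vanishes identically, so both sides are $0$, forcing $z_1=z_1'$ (period exactly $m$) and then $z_2=z_2'$ (by the first part). A continuous bijection of compact Hausdorff spaces is a homeomorphism, giving $(Y,T_p)\cong(\ZZ(m,p,p,\dots),+1)$.

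The main obstacle is producing $w$. On period-$q$ configurations $T_p$ acts as $I+C$ on $\ZZ_p^{\,q}$, with $C$ the cyclic shift, so I would realize $m$ as the order of an eigenvalue. Since $\gcd(m,p)=1$ and $m\ge2$, I choose $\alpha$ of multiplicative order $m$ in some $\mathbb F_{p^s}$ and set $\zeta=\alpha-1\ne0$; as every nonzero element of $\overline{\mathbb F_p}$ is a root of unity, $\zeta$ has some order $q$, automatically coprime to $p$, hence $\zeta$ is an eigenvalue of $C$ on $\ZZ_p^{\,q}$ and $1+\zeta=\alpha$ an eigenvalue of $I+C$. On the $\ZZ_p$-rational invariant subspace attached to the (separable, irreducible) minimal polynomial of $\alpha$, the map $I+C$ acts as multiplication by $\alpha$ in $\ZZ_p[\alpha]$, so every nonzero vector there has $T_p$-period equal to the order of $\alpha$, namely $m$; reading such a vector as a period-$q$ configuration gives $w$. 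I expect the finite-field bookkeeping — pinning the order down to \emph{exactly} $m$ while staying over the prime field $\ZZ_p$ — to be the only delicate point, the rest being forced by linearity and the Pascal support argument.
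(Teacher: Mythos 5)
Your proof is correct, but it takes a genuinely different route from the paper's, most visibly for $(\ZZ(m,p,p,\dots),+1)$. For $(\ZZ(p),+1)$ the paper uses the same orbit closure of $\dots000.100\dots$, but verifies the conjugacy softly: Lemma~1 supplies the column periods $1,p,p^2,\dots$ and the conjugacy is defined implicitly by $T^{\sum_i z_ip^{i-1}}(\bar x)\to x$; your Pascal/Lucas computation makes the same map explicit and additionally identifies the orbit closure exactly as the set of rows $\bigl(\binom{z}{k}\bmod p\bigr)_k$, $z\in\widetilde\ZZ(p)$, which you then reuse. For the second odometer the paper goes through its Lemmas~3 and~4: $(T_p)_R$ is conjugate to the full one-sided shift, hence admits a right tail of least period $m$, which is then extended leftward so that column periods grow without bound, and the same column-period argument applies. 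You instead exploit linearity of $T_p$ to superpose onto $\bar x$ a spatially periodic configuration $w$ of least temporal period $m$, obtaining an explicit product decomposition $Y\cong\ZZ/m\ZZ\times\widetilde\ZZ(p)$ with a clean injectivity argument (a configuration supported on the non-positive columns versus a spatially periodic one). Your finite-field construction of $w$ — realizing $m$ as the multiplicative order of $1+\zeta$ where $\zeta$ is a root of unity, hence an eigenvalue of the cyclic shift — is a concrete replacement for the paper's appeal to the full-shift conjugacy, and it does pin the period down to exactly $m$ since $I+C$ acts as multiplication by $\alpha$ on the relevant rational subspace. The only slip is cosmetic: that subspace should be attached to the minimal polynomial of $\zeta=\alpha-1$ (a factor of $X^q-1$) rather than of $\alpha$; the two elements generate the same field, so the argument is unaffected. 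What the paper's approach buys is brevity and reuse of its general lemmas; what yours buys is a fully explicit conjugacy and an explanation, via eigenvalues of $I+C$, of exactly which temporal periods spatially periodic points can carry.
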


\begin{proof}
Throughout this proof, we write~$T$ in place of~$T_p$.

First we prove the lemma for 
$(\ZZ(p),+1)$.
Consider the space-time diagram of
~$(\ZZ_p^\ZZ,T)$ with seed
$$\bar x = \dots 000.1000 \dots.$$
We show that $T$ restricted to the forward $T$-orbit-closure of~$\bar x$ is \topconj\ to~$(\ZZ(p),+1)$. Recall from Lemma~1, (3) and~(1), that $\bar x$ has an infinite forward $T$-orbit and that 
$\bar x_R := 100\dots$ is $T_R$-fixed.

Define a mapping $x \mapsto z$  from the forward $T$-orbit-closure of~$\bar x$ to~$\ZZ(p)$ as follows.
For $x$ in the forward $T$-orbit-closure of
~$\bar x$,  let
$z = (z_1,z_2,\dots) \in \ZZ(p)$
be such that
$$ 
T^{\sum_{i=1}^k z_i p^{i-1}} (\bar x)
\rightarrow x \quad \text{as } 
k \rightarrow \infty.
$$
\noindent That such a sequence exists follows from Lemma~1. 
(The partial sums of
$\sum_{i=1}^\infty z_i p^{i-1}$
are the rows in the space-time diagram of $(\ZZ_p^\ZZ,T)$
with seed~$\bar x$ at which the
appropriate ``right tails'' of~$x$
appear, so $z$ is well-defined.)
This mapping is continuous, one-to-one, and commutes with the appropriate actions.  Therefore it is a topological conjugacy.

The proof of the lemma for 
$(\ZZ(m,p,p,\dots),+1)$ 
follows the proof for $(\ZZ(p),+1)$
provided we can find a seed
$\bar y = \bar y_L.\bar y_R$ such that
$\bar y$ has an infinite forward $T$-orbit and
$\bar y_R$ is $T_R$-periodic with least period~$m$.
That we can do this is Lemma~4 below.
\end{proof}

\begin{lemma}
$(\ZZ_p^{\NN_0},T_R)$
is \topconj\ to the full one-sided shift  
$(\ZZ_p^{\NN_0},\sigma_L)$,
where  $\sigma_L$ is the left-shift defined by
$[\sigma_L(x)]_i := x_{i+1}$ 
($i\ge 0)$.
\end{lemma}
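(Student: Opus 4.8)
The plan is to exhibit an explicit conjugacy obtained by reading straight down the leftmost column of the space-time diagram. Observe first that $T_R = \mathrm{id} + \sigma_L$ as operators on $\ZZ_p^{\NN_0}$, since $[T_R(x)]_i = x_i + x_{i+1} = x_i + [\sigma_L(x)]_i$. I would then define $\phi : \ZZ_p^{\NN_0} \to \ZZ_p^{\NN_0}$ by
$$\phi(x) := \big([T_R^i(x)]_0\big)_{i \ge 0},$$
the sequence of entries appearing in column~$0$ of the \std\ of $(\ZZ_p^{\NN_0},T_R)$ with seed~$x$, and show that $\phi$ is the desired conjugacy from $(\ZZ_p^{\NN_0},T_R)$ onto $(\ZZ_p^{\NN_0},\sigma_L)$.

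Next I would dispose of the two easy properties. Continuity is immediate because $[T_R^i(x)]_0$ depends only on $x_0,\dots,x_i$, so each output coordinate is a function of finitely many input coordinates. The intertwining is a one-line index shift: for every $i\ge0$,
$$[\phi(T_R(x))]_i = [T_R^{i}(T_R(x))]_0 = [T_R^{i+1}(x)]_0 = [\phi(x)]_{i+1} = [\sigma_L(\phi(x))]_i,$$
so $\phi \circ T_R = \sigma_L \circ \phi$ with no computation at all; this is the payoff of defining $\phi$ columnwise.

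The heart of the argument, and the one step needing care, is bijectivity. Expanding $T_R^i = (\mathrm{id}+\sigma_L)^i$ by the binomial theorem gives the closed form
$$[T_R^i(x)]_0 = \sum_{j=0}^{i} \binom{i}{j}\, x_j \pmod p.$$
Regarded as a linear transformation of $\ZZ_p^{\NN_0}$, this is lower triangular with diagonal entries $\binom{i}{i}=1$; equivalently, writing $y=\phi(x)$ one recovers $x$ uniquely from the recursion $x_0=y_0$ and $x_i = y_i - \sum_{j=0}^{i-1}\binom{i}{j}x_j$. Hence $\phi$ is a bijection onto the full space $\ZZ_p^{\NN_0}$. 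The only subtlety is that the off-diagonal binomials may vanish mod~$p$, but the unit diagonal makes the recursion solvable regardless, so invertibility never fails. Finally, a continuous bijection from a compact space onto a Hausdorff space is a homeomorphism, so $\phi$ is a topological conjugacy and $(\ZZ_p^{\NN_0},T_R)$ and $(\ZZ_p^{\NN_0},\sigma_L)$ are \topconj, completing the proof.
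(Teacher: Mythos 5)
Your proof is correct and uses exactly the same conjugacy as the paper, namely $y_i := [T_R^i(x)]_0$; the paper simply states this map and cites \cite{BM} for the verification, whereas you supply the details (binomial expansion, unit lower-triangular invertibility) explicitly.
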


\begin{proof}
The topological conjugacy $x \mapsto y$
is given by
$y_i := [T_R^i(x)]_0$  ($i\ge0$).
For a more general result, see \cite{BM}.
\end{proof}

\begin{lemma}
Let $m \ge 1$.
There is a point
$\bar y = \bar y_L.\bar y_R$
with an infinite forward $T$-orbit
and such that $\bar y_R$ 
is $T_R$-periodic with
least period~$m$.
\end{lemma}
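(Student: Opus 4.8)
The plan is to build the right half $\bar y_R$ first, take the left half to be all zeros, and then invest all the work in showing the resulting seed has an infinite forward $T$-orbit (here $T = T_p$, $p$ prime, the ambient setting of the surrounding lemmas). To get $\bar y_R$ I would use Lemma~3: since $(\ZZ_p^{\NN_0},T_R)$ is \topconj\ to the full shift $(\ZZ_p^{\NN_0},\sigma_L)$, a $T_R$-periodic point of least period~$m$ is exactly the preimage of a $\sigma_L$-periodic sequence of least period~$m$, and such a sequence always exists. For instance the sequence $w$ with $w_i = 1$ when $m \mid i$ and $w_i = 0$ otherwise (the block $10^{m-1}$ repeated) has least $\sigma_L$-period exactly~$m$, because any period must be a multiple of the spacing of the single $1$ per block. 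Let $\bar y_R$ be its preimage; it is $T_R$-periodic with least period~$m$ and, being the image of a nonzero sequence under a bijection sending $\overline{0}$ to $\overline{0}$, it is nonzero.

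Now I would set $\bar y := \bar y_L . \bar y_R$ with $\bar y_L = \dots 000$ and prove the orbit is infinite in three steps. (i) Every column of the \std\ of $\bar y$ is periodic in time: for indices $\ge 0$ because $\bar y_R$ is $T_R$-periodic and the right half evolves autonomously, and then for negative indices by induction, using the purely local mechanism in the proof of Lemma~1(2) (periodicity of column~$i$ forces periodicity of column~$i-1$), which applies to any seed. (ii) Some column has least period $>1$: if $m \ge 2$ then $\bar y_R$ is not $T_R$-fixed, so it differs from $T_R\bar y_R$ at some index $i_0 \ge 0$, making column~$i_0$ non-constant; if $m = 1$ then $\bar y_R$ is the nonzero fixed point $100\dots$, and since the period-sum $\sum_{j=0}^{0}[T^j\bar y]_0 = 1 \not\equiv 0 \pmod p$, Lemma~1(2) gives column~$-1$ least period $p > 1$, so $i_0 = -1$. (iii) By the monotonicity in Lemma~1(2) — each step to the left multiplies the least period by a factor of~$p$, possibly~$1$ — every column to the left of the witness~$i_0$ has least period a multiple of that of column~$i_0$, hence $>1$. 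Since $i_0 \ge -1$, this covers every negative column, so each negative column is non-constant and therefore takes a nonzero value at some time.

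The last step converts (iii) into a contradiction with finiteness, exactly as in Lemma~1(3). If the forward orbit were finite it would consist of finitely many points; since $\bar y$ vanishes left of~$0$ and the rule $x_i \mapsto x_i + x_{i+1}$ enlarges the left end of the support by at most one cell per step, there would be a uniform~$K$ with $[T^j\bar y]_{-k} = 0$ for all $j \ge 0$ and all $k \ge K$. That would make columns $-k$ ($k \ge K$) identically zero, contradicting (iii). Hence the orbit is infinite and $\bar y$ is the desired seed.

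I expect the heart of the argument to be (ii)--(iii): the explicit construction of $\bar y_R$ is routine via Lemma~3, whereas the real content is guaranteeing that the leftward least periods never stabilize at~$1$. The clean way around this is to locate a \emph{single} non-constant column and then let the leftward period-multiplication of Lemma~1(2) propagate the conclusion, rather than trying to force growth at each individual step — which would fail, since once a period becomes a multiple of~$p$ the entry $\bar y_i$ can no longer influence the relevant column-sum. The points needing care are verifying that periodicity genuinely propagates to \emph{all} negative columns in step~(i) and that the support-growth bound is uniform along a finite orbit in the final step.
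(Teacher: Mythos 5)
Your proof is correct, but it reaches the conclusion by a genuinely different route from the paper. The paper keeps the right half $\bar y_R$ from Lemma~3 but then builds a \emph{nontrivial} left extension $\bar y_L$ block by block: since there are only $p^k$ $T_R^k$-fixed points but more candidate extensions, one can always append finitely many symbols so that the extended right-infinite sequence is not $T_R^k$-fixed, and doing this for $k=1,2,\dots$ forces the columns of the \std\ to have arbitrarily large least periods, whence the orbit is infinite. You instead take $\bar y_L = \dots 000$ and argue by contradiction: a single non-constant column (supplied by $\bar y_R \ne T_R\bar y_R$ when $m\ge 2$, or by the column $-1$ computation when $m=1$), together with the divisibility monotonicity implicit in Lemma~1(2) (column $i$ is the difference of consecutive entries of column $i-1$, so its least period divides that of column $i-1$), makes every negative column non-constant; the support-growth argument of Lemma~1(3) then rules out a finite orbit, since a finite orbit starting from a seed vanishing on the left would force all sufficiently far-left columns to be identically zero. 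Your version buys self-containedness — no counting of $T_R^k$-fixed points, no careful choice of $\bar y_L$ — at the cost of needing the monotonicity observation and the Lemma~1(3) mechanism for a general seed; you are right that the delicate point is exactly the one you flag, namely that one cannot force the least period to grow at every single step leftward, and your workaround (one witness column plus monotonicity plus the support argument) handles it cleanly. Both proofs are valid; yours arguably makes the role of Lemma~1(2)--(3) more transparent, while the paper's yields the stronger explicit conclusion that the column periods are unbounded.
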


\begin{proof}

By Lemma~3 there is a $T_R$-periodic point
$\bar y_R = \bar y_0 \bar y_1 \dots$
with least period~$m$.
It follows from Lemma~1(2) 
that column~$0$ in the \std\ of~$T$ (n.b. $T$, not~$T_R$) 
with seed any left extension of
~$\bar y_R$ is periodic.  
So it suffices to show that
$\bar y_R$ has a left extension such that the columns in the appropriate
\std\ have arbitrarily large least periods.

By Lemma~3, for every $k \ge 1$ there are $p^k$ $T_R^k$-fixed points.
For~$k=1$, (since $p^2>p^1$) there exist $\bar y_{-1}$ and $\bar y_{-2}$ such that
$\bar y_{-2}\bar y_{-1}\bar y_0 \dots$ is not $T_R$-fixed.
For~$k=2$, there exist 
$\bar y_{-3}$, $\bar y_{-4}$,  and $\bar y_{-5}$ such that
$\bar y_{-5}\bar y_{-4} \dots$ is not $T_R^2$-fixed.
Continue with $k=3,4,\dots$.
\end{proof}

\begin{lemma}
If $(X,f)$ can be embedded in~$(\widehat X,\widehat f)$ and  $(Y,g)$ can be embedded in~$(\widehat Y,\widehat g$), then
$(X,f) \times (Y,g)$ can be embedded in
$(\widehat X \times \widehat Y, \widehat f \times \widehat g)$.
\qed
\end{lemma}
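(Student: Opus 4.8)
The plan is to use the product of the two given embedding images. By hypothesis there are a closed, $\widehat f$-invariant set $\widehat X' \subseteq \widehat X$ together with a conjugacy $\phi : (X,f) \to (\widehat X', \widehat f|_{\widehat X'})$, and a closed, $\widehat g$-invariant set $\widehat Y' \subseteq \widehat Y$ together with a conjugacy $\psi : (Y,g) \to (\widehat Y', \widehat g|_{\widehat Y'})$. I claim the set $\widehat X' \times \widehat Y'$ is the required embedding image inside $\widehat X \times \widehat Y$.

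First I would verify that $\widehat X' \times \widehat Y'$ is a closed, $(\widehat f \times \widehat g)$-invariant subset. Closedness is immediate, since a product of closed sets is closed in the product topology; and invariance follows from $(\widehat f \times \widehat g)(\widehat X' \times \widehat Y') = \widehat f(\widehat X') \times \widehat g(\widehat Y') \subseteq \widehat X' \times \widehat Y'$, using the invariance of each factor. Next I would take the product map $\phi \times \psi : X \times Y \to \widehat X' \times \widehat Y'$. This is a homeomorphism, being a product of two homeomorphisms, and it intertwines the dynamics: the identity $(\phi \times \psi) \circ (f \times g) = (\widehat f \times \widehat g) \circ (\phi \times \psi)$ on $X \times Y$ follows coordinatewise from $\phi \circ f = \widehat f|_{\widehat X'} \circ \phi$ and $\psi \circ g = \widehat g|_{\widehat Y'} \circ \psi$. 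Hence $(X \times Y, f \times g)$ is \topconj\ to $(\widehat X' \times \widehat Y', (\widehat f \times \widehat g)|_{\widehat X' \times \widehat Y'})$, which is exactly the asserted embedding.

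There is essentially no obstacle here: the statement is a formal consequence of the definition of embedding together with the fact that closedness, invariance, the homeomorphism property, and the intertwining relation are all preserved under taking products. The only point requiring any care is to confirm that the product topology is the operative one, so that ``closed $\times$ closed is closed'' and ``homeomorphism $\times$ homeomorphism is a homeomorphism'' both apply; for the compact metrizable systems considered in this paper this is automatic, which is why the authors state the lemma with \qed and omit a detailed argument.
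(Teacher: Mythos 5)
Your proof is correct and is precisely the routine product argument the authors had in mind; the paper omits the proof entirely (the lemma is stated with \qed), so there is nothing to compare beyond noting that your verification of closedness, invariance, and the conjugacy property of $\phi\times\psi$ supplies exactly the details the authors deemed immediate.
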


\begin{lemma}
Let $m,n \ge 2$ be relatively prime.
Then $(\ZZ(mn),+1)$ is
topologically conjugate to
$(\ZZ(m) \times \ZZ(n),(+1,+1))$.
If, in addition, $s \ge 2$ is relatively prime to both ~$m$ and ~$n$, then
$(\ZZ(s,mn,mn,\dots),+1)$ is
topologically conjugate to
$(\ZZ(s,m,m,\dots) \times \ZZ(n),(+1,+1))$.
 
\end{lemma}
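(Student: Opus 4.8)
The plan is to work entirely with the inverse-limit (coordinatewise, ``without carrying'') representation $(\widetilde\ZZ(\cdot),+\tilde1)$, which by the Definitions and Background section is \topconj\ to the corresponding $(\ZZ(\cdot),+1)$, and to build the required conjugacies from the Chinese Remainder Theorem applied one level at a time. For the first assertion I would use that $\widetilde\ZZ(mn)=\invlim_k(\ZZ/(mn)^k\ZZ,\beta_k)$ and that, because $\gcd(m,n)=1$ forces $\gcd(m^k,n^k)=1$, the reduction map $z \mapsto (z \bmod m^k,\, z \bmod n^k)$ is a ring (hence group) isomorphism $\ZZ/(mn)^k\ZZ \to \ZZ/m^k\ZZ \times \ZZ/n^k\ZZ$ for every $k$.

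First I would check that these level-$k$ isomorphisms are compatible with the binding maps: applying $z \mapsto z \bmod (mn)^k$ and then reducing mod $m^k$ and mod $n^k$ gives the same pair as first reducing mod $m^{k+1},n^{k+1}$ and then mod $m^k,n^k$ (both equal $(z \bmod m^k,\, z \bmod n^k)$, since $m^k$ and $n^k$ divide $(mn)^k$). Hence they assemble into a topological group isomorphism $\widetilde\ZZ(mn)\cong\widetilde\ZZ(m)\times\widetilde\ZZ(n)$; it is automatically a homeomorphism, being a continuous bijection of compact Hausdorff inverse limits. Since $1 \bmod m^k=1$ and $1 \bmod n^k=1$, this isomorphism carries the generator $\tilde1=(1,1,\dots)$ to $(\tilde1,\tilde1)$, so it intertwines $+\tilde1$ with $(+\tilde1,+\tilde1)$; by the remark that any topological group isomorphism sending generator to generator is a topological conjugacy, this is the desired conjugacy, and transporting back along $(\ZZ(\cdot),+1)\cong(\widetilde\ZZ(\cdot),+\tilde1)$ finishes the first part.

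For the second assertion I would run the same argument with $S=(s,mn,mn,\dots)$, whose partial products are $s_1\cdots s_k=s(mn)^{k-1}=(sm^{k-1})(n^{k-1})$. Since $s$ and $m$ are each prime to $n$, CRT again splits each level as $\ZZ/(sm^{k-1})\ZZ \times \ZZ/n^{k-1}\ZZ$; the first factor is exactly the level-$k$ term of $\widetilde\ZZ(s,m,m,\dots)$, while the second factor runs through $0,\ \ZZ/n\ZZ,\ \ZZ/n^2\ZZ,\dots$ and so has inverse limit $\widetilde\ZZ(n)$. The generator check is identical. The one point needing a word of care is this index shift in the $n$-factor: the system $(\ZZ/n^{k-1}\ZZ)_k$ is not literally the defining system $(\ZZ/n^k\ZZ)_k$ of $\widetilde\ZZ(n)$, but it is cofinal with it (an extra trivial term at the front changes nothing), so the inverse limits agree.

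I do not expect a serious obstacle; the only thing to be careful about is the compatibility of the CRT isomorphisms with the binding maps, i.e.\ that the level maps genuinely form a morphism of inverse systems rather than merely a family of isomorphisms, together with the harmless reindexing in the second part. A possible alternative, invoking that the multiplicity function is a complete conjugacy invariant, would first require knowing that the product of coprime odometers is itself an \odo\ and that multiplicity functions add under such products; since establishing that is essentially the CRT computation above, the direct inverse-limit argument is the more economical route.
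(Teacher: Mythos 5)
Your proof is correct, and it takes a genuinely different route from the paper's, though both ultimately rest on the coprimality of $m$ and $n$. The paper works in the ``with carrying'' model $\ZZ(mn)$: it writes down the map $(z_0,z_1,\dots)\mapsto(z',z'')$ whose level-$k$ partial sums give the base-$m$ (resp.\ base-$n$) expansions of $\sum_{i=0}^k z_i(mn)^i$, verifies that it is a well-defined continuous group homomorphism sending $(1,0,\dots)$ to $((1,0,\dots),(1,0,\dots))$, and then proves surjectivity by a density argument: the image contains the diagonal orbit $\{(k\cdot 1,k\cdot 1):k\ge0\}$, which is dense in $\ZZ(m)\times\ZZ(n)$ precisely because $\gcd(m,n)=1$. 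You instead work in the inverse-limit model $\widetilde\ZZ(\cdot)$ and apply the Chinese Remainder Theorem one level at a time, checking compatibility with the binding maps so that the level isomorphisms assemble into a morphism of inverse systems; this makes bijectivity automatic (every level map is already an isomorphism), so the density step disappears and coprimality is used only in the finite CRT. Under the identification $\ZZ(S)\cong\widetilde\ZZ(S)$ the two constructions yield the same conjugacy; your version is arguably the more structural one, and it handles the second statement --- for which the paper only says the proof is ``similar'' and omits details --- with equal ease, the only wrinkle being the reindexing of the $n$-factor by one level, which you correctly dispose of by cofinality.
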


\begin{proof}
To prove the first statement it suffices to find a topological group isomorphism  of~$\ZZ(mn)$ onto
$\ZZ(m) \times\ZZ(n)$
that takes 
$(1,0,\dots) \in \ZZ(mn)$ to
$((1,0,\dots),(1,0,\dots)) \in 
\ZZ(m) \times \ZZ(n)$.

Map $\ZZ(mn)$ to
$\ZZ(m) \times \ZZ(n)$ by
$$(z_0,z_1,\dots) \mapsto
  ((z'_0,z'_1,\dots),(z''_0,z''_1,\dots)),$$
where for every $k \ge 0$,
$\sum_{i=0}^k z'_i m^i$ is the beginning of the base~$m$ expansion of $\sum_{i=0}^k z_i (mn)^i$;
similarly for ~$z''$.
This map is well-defined, takes
$(1,0,\dots)$ to
\linebreak
$((1,0,\dots),(1,0,\dots))$,
 and satisfies all the conditions of topological group isomorphism, except possibly ontoness.  To see that it maps $\ZZ(mn)$ onto
$\ZZ(m) \times \ZZ(n)$, notice that it maps the set
$$\{k(1,0,\dots) \in \ZZ(mn): k \ge 0\},$$
which is dense in $\ZZ(mn)$, onto the set
$$\{(k(1,0,\dots),k(1,0,\dots)) \in \ZZ(m) \times \ZZ(n): k \ge 0\}.$$
The latter set is dense in
$\ZZ(m) \times \ZZ(n)$
because $m$ and $n$ are relatively prime.

The proof of the second statement is similar.  We omit the details.
\end{proof}

\begin{lemma}
Let $m,n \ge 2$ be relatively prime.
Then $(\ZZ_{mn}^\ZZ,T_{mn})$ is topologically conjugate to
$(\ZZ_m^\ZZ \times \ZZ_n^\ZZ,T_m \times T_n)$.
\end{lemma}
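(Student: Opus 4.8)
The plan is to apply the Chinese Remainder Theorem at the level of single symbols and then lift it coordinatewise. Since $\gcd(m,n)=1$, the reduction map $\phi:\ZZ_{mn}\to\ZZ_m\times\ZZ_n$ given by $a\mapsto(a\bmod m,\,a\bmod n)$ is a ring isomorphism. Define $\Phi:\ZZ_{mn}^\ZZ\to\ZZ_m^\ZZ\times\ZZ_n^\ZZ$ by applying $\phi$ in each coordinate and separating the two resulting components: if $\phi(x_i)=(x_i',x_i'')$, then $\Phi(x)=(x',x'')$ with $x'=(x_i')_{i\in\ZZ}$ and $x''=(x_i'')_{i\in\ZZ}$. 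Being the coordinatewise application of a bijection between finite alphabets, $\Phi$ is a homeomorphism of the product spaces, and it plainly commutes with the shift in each factor.

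It then remains to check that $\Phi$ carries $T_{mn}$ to $T_m\times T_n$, i.e. that $\Phi\circ T_{mn}=(T_m\times T_n)\circ\Phi$. This reduces to a statement about the local rules, and here the key observation is that the local rule is just the additive operation $a,b\mapsto a+b$ of the ring. Because $\phi$ is a ring homomorphism, the reduction mod~$m$ of $x_i+x_{i+1}\bmod mn$ equals $(x_i\bmod m)+(x_{i+1}\bmod m)\bmod m$, and likewise mod~$n$. Hence the $i$th symbol of $T_{mn}(x)$, pushed through $\phi$, is exactly the pair $\big([T_m(x')]_i,[T_n(x'')]_i\big)$, which is precisely what $\Phi$ sends it to. Therefore $\Phi$ is the desired \topconj.

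The main obstacle, insofar as there is one, is purely bookkeeping: verifying that the identification of $(\ZZ_m\times\ZZ_n)^\ZZ$ with $\ZZ_m^\ZZ\times\ZZ_n^\ZZ$ is the natural one under which the induced coordinatewise-additive \ca\ splits as the genuine product $T_m\times T_n$, rather than some twisted version. There is no real difficulty, since the local rule uses only the ring addition, which $\phi$ intertwines componentwise, and relative primality of $m$ and $n$ enters solely to guarantee that $\phi$ is a bijection. In particular, unlike in Lemma~6, no density or orbit-closure argument is required, because $\phi$ is already a bijection on the finite alphabets and ontoness of $\Phi$ is immediate.
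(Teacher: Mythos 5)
Your proof is correct and is essentially the argument the paper gives: both rest on the Chinese Remainder Theorem ring isomorphism between $\ZZ_{mn}$ and $\ZZ_m\times\ZZ_n$, applied coordinatewise, with the intertwining of the local rules following from additivity of the isomorphism. The paper merely states this in one line; your write-up supplies the routine verification.
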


\begin{proof}
Any ring isomorphism of 
$\ZZ_m \times \ZZ_n$ onto
$\ZZ_{mn}$ 
is a topological conjugacy of
$(\ZZ_{m}^\ZZ \times \ZZ_{n}^\ZZ,T_m \times T_n)$ 
onto~$(\ZZ_{mn}^\ZZ,T_{mn})$.
\end{proof}

\begin{Th2B}
If an \odo\ $(\ZZ(S),+1)$   can be embedded in the one-dimensional, two-sided \ca\  
$(\ZZ_n^\ZZ,T_n)$  with local rule
$$
x_i \mapsto  x_i + x_{i+1} \mod n  \quad (i \in \ZZ),
$$
then $(\ZZ(S),+1)$ is finitary,
i.e. the set of prime divisors of the members of ~$S$ is finite.
\end{Th2B}

\begin{proof}
Suppose that $(\ZZ(S),+1)$ is \topconj\ to~$ (X,T_n)$, where
$X$ is a closed, $T_n$-invariant
subset of ~$\ZZ_n^\ZZ$.
Consider the space-time diagram of 
~$(\ZZ(S),+1)$ with seed
$(0,0,\dots)$.  
Every column is periodic and
for $p$~prime,   
$p$~divides the least period of some column if and only if
$p$~divides some $s \in S$.

It follows from the uniform continuity of the topological conjugacy and its inverse that 
every column in any space-time diagram of~$(X,T_n)$ is periodic. 
For $p$~prime, $p$~divides the least period of some column in a space-time diagram of ~$(X,T_n)$ if and only if
 $p$~divides the least period of some column in the space-time diagram of
~$(\ZZ(S),+1)$ with seed~$(0,0,\dots)$.

The proof is completed by applying the following lemma.
\end{proof}

\begin{lemma}
For any $n \ge 2$,
the set of primes ~$p$  such that $p$ divides the least period of some column in a space-time diagram of
~$(X,T_n)$ is finite.
\end{lemma}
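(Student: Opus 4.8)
The plan is to pass from the per-seed column periods supplied by Lemma~1 to \emph{global} periods of the coordinate functions, and then to show that all of these global periods share a common factor coprime to~$n$. Concretely, for each $i \in \ZZ$ let $f_i \colon X \to \ZZ_n$ be the coordinate map $f_i(x) = x_i$; the column at position~$i$ of the \std\ with seed~$x$ is the sequence $N \mapsto f_i(T_n^N x)$. I would first record that each $f_i$ is continuous with finitely many values, and that, because $(X,T_n)$ is conjugate to the odometer $(\ZZ(S),+1)$, it factors through a finite cyclic quotient of the odometer; hence there is a least finite $M_i \ge 1$ with $f_i \circ T_n^{M_i} = f_i$ on all of~$X$. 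Thus every column at position~$i$, for every seed, has least period dividing~$M_i$, and in fact $M_i = \operatorname{lcm}_{x \in X} m_i(x)$, where $m_i(x)$ denotes the least period of column~$i$ with seed~$x$.

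The two ingredients are then: (a) each $M_i$ is finite, and (b) the ratios $M_{i-1}/M_i$ divide~$n$. For~(b) I would apply Lemma~1(2) seedwise: $m_{i-1}(x) = m_i(x)\,n'$ with $n' \mid n$, so $m_i(x) \mid m_{i-1}(x) \mid n M_i$; taking least common multiples over all seeds gives $M_i \mid M_{i-1} \mid n M_i$, whence $M_{i-1}/M_i$ divides~$n$. Consequently the part of $M_i$ coprime to~$n$ is the same integer~$r$ for every~$i$: writing $M_i = n_i\, r$ with $n_i$ divisible only by primes dividing~$n$, the number~$r$ is a single fixed, finite positive integer.

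To finish, observe that the least period of any column in any \std\ of $(X,T_n)$ is some $m_i(x) \mid M_i = n_i r$, so every prime dividing it divides~$nr$. Therefore the set of primes in question is contained in the union of the (finitely many) primes dividing~$n$ and the (finitely many) primes dividing the fixed integer~$r$, and so is finite.

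The step I expect to be the real obstacle is~(a): producing a \emph{uniform} period $M_i < \infty$ valid for all seeds simultaneously. Lemma~1 only gives a period for each seed separately, and these could a priori be unbounded as the seed varies; indeed the statement is false for the full shift $(\ZZ_n^\ZZ, T_n)$, where spatially periodic configurations realize columns of least period $p^d - 1$ for every~$d$, and hence infinitely many primes. The finiteness therefore genuinely uses that $X$ is conjugate to an odometer: I would exploit that a continuous $\ZZ_n$-valued function on the odometer is constant on the cosets of a finite-index clopen subgroup, equivalently that the image subshift $\{(f_i(T_n^N x))_N : x \in X\}$ is equicontinuous and hence finite, which bounds the periods $m_i(x)$ uniformly and yields the finite~$M_i$.
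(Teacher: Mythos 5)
Your argument is correct, and at its core it is the same as the paper's: both proofs run on Lemma~1(2), i.e., on the fact that passing one column to the left multiplies the least period by a divisor of~$n$, so that the prime content of every column is controlled by~$n$ together with the prime content of a single reference column. The paper's version is the minimal one: fix a seed and one column of least period~$m$; columns to its right have least periods dividing~$m$, columns to its left have least periods of the form $m$ times a product of divisors of~$n$, so every prime in sight divides~$mn$. What you add --- and correctly identify as the only delicate point --- is the uniformization over seeds: the lemma quantifies over all space-time diagrams of~$(X,T_n)$, and a per-seed bound by the primes of $m_0(x)\,n$ does not by itself give a finite union unless $m_0(x)$ is controlled uniformly in~$x$. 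Your global periods $M_i$ (finite because a continuous $\ZZ_n$-valued function on an odometer factors through a finite cyclic quotient), together with $M_i \mid M_{i-1} \mid nM_i$ and the resulting constancy of the part of $M_i$ coprime to~$n$, close this point cleanly; the paper leaves it implicit (it is also recoverable from minimality and equicontinuity, since the least period of $j \mapsto g(c+j)$ for $g$ defined on a finite cyclic group does not depend on~$c$, so $m_i(x)$ is in fact seed-independent). One small caveat, which applies equally to the paper: Lemma~1 is stated for the orbit closure of $\dots000.100\dots$, and you are applying part~(2) to the embedded subsystem~$X$; this is legitimate because the proof of that part uses only the local rule, but it deserves an explicit remark.
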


\begin{proof}
Since every column in the space-time diagram of 
~$(\ZZ(S),+1)$ with seed~$(0,0,\dots)$
is periodic, it follows from 
Lemma~1  that every column in any space-time diagram of~$(X,T_n)$ is periodic.
Furthermore, if column~$i$ has least period~$m$, then column~$i-1$ has least period~$n'm$, where $n'$ is a factor of~$n$.

So if a column has least period~$m$, then
any prime that divides the least period of some column to its left also divides~$mn$. Hence the set of all primes that divide the least period of any column is finite.
\end{proof}

\end{document}